\numberwithin{equation}{section}
\numberwithin{figure}{section}
\theoremstyle{plain}
\newtheorem{thm}{\protect\theoremname}
  \theoremstyle{definition}
  \newtheorem{defn}[thm]{\protect\definitionname}
  \theoremstyle{remark}
  \newtheorem*{rem*}{\protect\remarkname}
  \theoremstyle{remark}
  \newtheorem{claim}[thm]{\protect\claimname}
\let\myTOC\tableofcontents
\renewcommand\tableofcontents{%
  \frontmatter
  \pdfbookmark[1]{\contentsname}{}
  \myTOC
  \mainmatter }
\def\LyX{\texorpdfstring{%
  L\kern-.1667em\lower.25em\hbox{Y}\kern-.125emX\@}
  {LyX}}
  \providecommand{\claimname}{Claim}
  \providecommand{\definitionname}{Definition}
  \providecommand{\remarkname}{Remark}
\providecommand{\theoremname}{Theorem}
\begin{document}

\title{The periodic complex method in interpolation spaces}

\author{Eliran Avni}

\address{Department of Mathematics, Technion - Israel Institute of Technology,
Haifa 32000, Israel}

\email{eliran5@tx.technion.ac.il}

\keywords{Interpolation spaces, periodic complex method.}

\thanks{The research was supported by a Chester and Taube Hurwitz Foundation
Fellowship.}
\begin{abstract}
In this paper we consider the {}``periodic'' variant of the complex
interpolation method, apparently first studied by Peetre in \cite{peetre}.
Cwikel showed in \cite{cwikel-complex} that using functions with
a given period $i\lambda$ in the complex method construction introduced
and studied by Calder\'on (see \cite{Calderon}), one may construct
the same interpolation spaces as in the {}``regular'' complex method,
up to equivalence of norms. Cwikel also showed in \cite{cwikel-complex}
that the constants of this equivalence will, in some cases, {}``blow
up'' as $\lambda\rightarrow0$. We show that the constants of this
equivalence approach $1$ as $\lambda\rightarrow\infty$. Intuitively,
this means that when applying the complex method of Calder\'on, it
makes a very small difference if one restricts oneself to periodic
functions, provided that the period is very large.
\end{abstract}
\maketitle

\section{Introduction}

In this section we recall the basic definitions of interpolation spaces
and of the complex method of Calder\'on and its periodic variant.
We also prove several auxiliary claims to be used later in this paper.
Most of the definitions in this section appear extensively in the
literature (see, e.g., \cite{BSr-1,Bergh_Lofstrom,bk,kps,T}).
\begin{defn}
Whenever $A_{0},\, A_{1}$ are two Banach spaces that are both continuously
embedded in some topological Hausdorff vector space $\mathcal{A}$
we say that $\left(A_{0},A_{1}\right)$ is a \textbf{Banach couple}.
\end{defn}

\begin{defn}
Whenever $(A_{0},A_{1})$ is a Banach couple, we define $A_{0}+A_{1}$
to be the space of all elements $a\in\mathcal{A}$ such that $a=a_{0}+a_{1}$
for some $a_{j}\in A_{j}$ ($j=0,1)$. This is a Banach space, when
endowed with the following norm: 
\[
\left\Vert a\right\Vert _{A_{0}+A_{1}}=\inf\left\{ \left\Vert a_{0}\right\Vert _{A_{0}}+\left\Vert a_{1}\right\Vert _{A_{1}}\,:\, a_{j}\in A_{j}\,,\, j=0,1\,,\, a=a_{0}+a_{1}\right\} .
\]

\end{defn}

\begin{defn}
For a given Banach couple $\left(A_{0},A_{1}\right)$, the statement
{}``\textbf{$T:(A_{0},A_{1})\rightarrow(A_{0},A_{1})$ is a bounded
linear operator}'' means that $T$ is a linear operator from $A_{0}+A_{1}$
into itself such that the restriction of $T$ to $A_{j}$ is a bounded
operator from $A_{j}$ into itself (for $j=0,1$).
\end{defn}

\begin{rem*}
We remark that if $T:(A_{0},A_{1})\rightarrow(A_{0},A_{1})$ is a
bounded linear operator then automatically $T$ is also a bounded
linear operator from $A_{0}+A_{1}$ into itself, and the following
inequality holds: 
\[
\Vert T\Vert_{A_{0}+A_{1}\to A_{0}+A_{1}}\leq\max\left\{ \Vert T\Vert_{A_{0}\rightarrow A_{0}},\Vert T\Vert_{A_{1}\rightarrow A_{1}}\right\} \,.
\]

\end{rem*}

We recall that $A_{0}\cap A_{1}$ , when endowed with the norm 
\[
\left\Vert x\right\Vert _{A_{0}\cap A{}_{1}}=\mbox{max}\left\{ \left\Vert x\right\Vert _{A_{0}},\left\Vert x\right\Vert _{A_{1}}\right\} ,
\]
 is also a Banach space. This will be relevant in the following definition.
\begin{defn}
A Banach space $A$ satisfying $A_{0}\cap A_{1}\subseteq A\subseteq A_{0}+A_{1}$
where all the inclusions are continuous (that is, when the identity
maps $i:A_{0}\cap A_{1}\rightarrow A$ and $i:A\rightarrow A_{0}+A_{1}$
are continuous) is called an \textbf{intermediate space} of $(A_{0},A_{1})$.
\end{defn}

\begin{defn}
Whenever $\left(A_{0},A_{1}\right)$ is a Banach couple, the statement
{}``\textbf{$A$ is an interpolation space with respect to $(A_{0},A_{1})$}''
is a concise way to say the following: $A$ is an intermediate space
of $\left(A_{0},A_{1}\right)$, and the restriction to $A$ of every
bounded linear operator $T:(A_{0},A_{1})\rightarrow(A_{0},A_{1})$
is a bounded operator from $A$ into itself.
\end{defn}

For future reference, it is convenient to set $\mathbb{S}=\left\{ z\in\mathbb{Z}\,:\,0\leq\mbox{Re}z\leq1\right\} $.
We also denote the interior of $\mathbb{S}$ by $\mathbb{S}^{\circ}$.

Next we define five special spaces, that play an essential role in
the complex method and the periodic complex method (cf. \cite{Calderon,cwikel-complex,peetre}).
We stress that although the definitions we have given in this paper
thus far are applicable for Banach spaces defined over $\mathbb{R}$
or $\mathbb{C}$, from this point onwards we assume all Banach spaces
we deal with are defined over $\mathbb{C}$.
\begin{defn}
The space $\mathscr{F}^{\infty}\left(A_{0},A_{1}\right)$, sometimes
denoted simply by $\mathscr{F}^{\infty}$, consists of all continuous
functions $f:\mathbb{S}\rightarrow A_{0}+A_{1}$ that satisfy these
conditions:\end{defn}
\begin{itemize}
\item The restriction of $f$ to $\left\{ j+iy\,:\, y\in\mathbb{R}\right\} $
is a continuous function into $A_{j}$, for $j=0,1$.
\item The restriction of $f$ to $\mathbb{S}^{\circ}$  is an analytic function
into $A_{0}+A_{1}$. (One of the equivalent definitions of that statement
is that for any bounded linear functional $l:A_{0}+A_{1}\rightarrow\mathbb{C}$,
the composition $l\circ f:\mathbb{S}^{\circ}\rightarrow\mathbb{C}$
is an analytic function.)
\item $\mbox{sup}_{z\in\mathbb{S}}\left\{ \left\Vert f(z)\right\Vert _{A_{0}+A_{1}}\right\} <\infty$.
\item $\mbox{sup}_{j=0,1\,,\, y\in\mathbb{R}}\left\{ \left\Vert f(j+iy)\right\Vert _{A_{j}}\right\} <\infty$.
\end{itemize}
This is a Banach space when endowed with the norm:
\[
\left\Vert f\right\Vert _{\mathscr{F}^{\infty}}=\mbox{sup}_{j=0,1\,,\, y\in\mathbb{R}}\left\{ \left\Vert f(j+iy)\right\Vert _{A_{j}}\right\} \,.
\]

\begin{defn}
The space $\mathscr{F}\left(A_{0},A_{1}\right)$, or simply $\mathscr{F}$,
is the closed subspace of $\mathscr{F}^{\infty}\left(A_{0},A_{1}\right)$
which consists of functions $f\in\mathscr{F}^{\infty}$ that satisfy
$\mbox{lim}_{\left|y\right|\rightarrow\infty}\left\Vert f(j+iy)\right\Vert _{A_{j}}=0$,
for both $j=0$ and $j=1$.
\end{defn}

\begin{defn}
For each fixed $\lambda>0$ the space $\mathscr{F}_{\lambda}\left(A_{0},A_{1}\right)$,
or simply $\mathscr{F}_{\lambda}$, is the closed subspace of $\mathscr{F}^{\infty}\left(A_{0},A_{1}\right)$
which consists of functions $f\in\mathscr{F}^{\infty}$ that are $i\lambda$-periodic
(that is, $f(z)=f(z+i\lambda)$ for all $z\in\mathbb{S}$).
\end{defn}

\begin{defn}
Following Calder\'on in \cite{Calderon}, for each $0<\theta<1$,
we define $\left[A_{0},A_{1}\right]_{\theta}=\left\{ f(\theta)\,:\, f\in\mathscr{F}\left(A_{0},A_{1}\right)\right\} $.
\end{defn}
This is a Banach space, when endowed with the norm:
\begin{equation}
\left\Vert a\right\Vert _{[\theta]}=\mbox{inf}\left\{ \left\Vert f\right\Vert _{\mathscr{F}^{\infty}}\,:\, a=f(\theta)\,,\, f\in\mathscr{F}\left(A_{0},A_{1}\right)\right\} \,.\label{eq:DefNorm}
\end{equation}

\begin{rem*}
It is well known that in this preceding definition of the space $\left[A_{0},A_{1}\right]_{\theta}$
one can replace $\mathscr{F}$ by $\mathscr{F}^{\infty}$ and this
does not change the set $\left[A_{0},A_{1}\right]_{\theta}$. Furthermore,
replacing $\mathscr{F}$ by $\mathscr{F}^{\infty}$ in (\ref{eq:DefNorm})
does not change the norm $\left\Vert a\right\Vert _{[\theta]}$. This
is easily deduced from the fact that for every $f\in\mathscr{F}^{\infty}\left(A_{0},A_{1}\right)$
and every $\delta>0$, the function $f_{\delta}(z)=e^{\delta(z-\theta)^{2}}f(z)$
is an element of $\mathscr{F}\left(A_{0},A_{1}\right)$ and from the
fact that $\mbox{inf}_{\delta>0}\left\{ \left\Vert f_{\delta}\right\Vert _{\mathscr{F}^{\infty}}\right\} \leq\left\Vert f\right\Vert _{\mathscr{F}^{\infty}}$
(cf., e.g., \cite{cwikel-complex}. This is also implicit in \cite{Calderon}.)
\end{rem*}

An essential feature of the complex method, as proven in \cite{Calderon},
is that $\left[A_{0},A_{1}\right]_{\theta}$ is an interpolation space
of $(A_{0},A_{1})$ for any $0<\theta<1$. In other words, given a
Banach couple $\left(A_{0},A_{1}\right)$, Calder\'on devised a mechanism
for constructing a family of interpolation spaces of that couple.
We remark that when setting $A_{0}=L^{p_{0}}(\mu)$ and $A_{1}=L^{p_{1}}(\mu)$
one obtains that $\left[A_{0},A_{1}\right]_{\theta}=L^{p_{\theta}}(\mu)$,
where $\frac{1}{p_{\theta}}=\frac{1-\theta}{p_{0}}+\frac{\theta}{p_{1}}$
(with equality of norms) whenever $p_{0},p_{1}\in[1,\infty]$. This
example of spaces of the form $\left[A_{0},A_{1}\right]_{\theta}$,
which of course comes from Thorin's proof of the Riesz-Thorin Theorem,
was probably the key motivating factor that guided Calder\'on in
his construction.
\begin{rem*}
We note that although there are many other examples of Banach couples
for which concrete descriptions of the spaces $\left[A_{0},A_{1}\right]_{\theta}$
may be found, there are also many other examples, including even {}``natural''
ones, where no concrete descriptions of the spaces $\left[A_{0},A_{1}\right]_{\theta}$
are known.
\end{rem*}

\begin{defn}
As in \cite{cwikel-complex}, for each $0<\theta<1$ and $\lambda>0$,
we define $\left[A_{0},A_{1}\right]_{\theta}^{\lambda}=\left\{ f(\theta)\,:\, f\in\mathscr{F}_{\lambda}\left(A_{0},A_{1}\right)\right\} $.
\end{defn}
This is a Banach space, when endowed with the norm:
\[
\left\Vert a\right\Vert _{[\theta,\lambda]}=\mbox{inf}\left\{ \left\Vert f\right\Vert _{\mathscr{F}^{\infty}}\,:\, a=f(\theta)\,,\, f\in\mathscr{F}_{\lambda}\left(A_{0},A_{1}\right)\right\} \,.
\]

It is easy to see that $\left[A_{0},A_{1}\right]_{\theta}^{\lambda}$
is an interpolation space of $\left(A_{0},A_{1}\right)$ for all $0<\theta<1$
and all $\lambda>0$. Furthermore, the inclusion $\left[A_{0},A_{1}\right]_{\theta}^{\lambda}\subseteq\left[A_{0},A_{1}\right]_{\theta}$
and inequality $\left\Vert a\right\Vert _{[\theta]}\leq\left\Vert a\right\Vert _{[\theta,\lambda]}$
(for all $a\in\left[A_{0},A_{1}\right]_{\theta}^{\lambda}$) are obvious.
In fact, Cwikel proved the following theorem in \cite{cwikel-complex}:
\begin{thm}
\label{thm:Cwikel}For every $0<\theta<1$ and every $\lambda>0$
we have $\left[A_{0},A_{1}\right]_{\theta}=\left[A_{0},A_{1}\right]_{\theta}^{\lambda}$
as two subsets of $A_{0}+A_{1}$. Moreover, there is a $C>0$ such
that $\left\Vert a\right\Vert _{\left[\theta\right]}\leq\left\Vert a\right\Vert _{\left[\theta,\lambda\right]}\leq C\left\Vert a\right\Vert _{\left[\theta\right]}$
for all $a\in\left[A_{0},A_{1}\right]_{\theta}$ (the constant $C$
may depend, in general, on both $\lambda$ and $\theta$).
\end{thm}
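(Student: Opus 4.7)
The easy inequality $\|a\|_{[\theta]}\le\|a\|_{[\theta,\lambda]}$ is immediate from the remark preceding the theorem: every $f\in\mathscr{F}_\lambda$ belongs to $\mathscr{F}^\infty$, so using $\mathscr{F}^\infty$ in place of $\mathscr{F}$ in the definition of $\|\cdot\|_{[\theta]}$ yields the bound at once. The nontrivial task is to produce, for each $a\in[A_0,A_1]_\theta$ and each $f\in\mathscr{F}(A_0,A_1)$ with $f(\theta)=a$, an $i\lambda$-periodic $g\in\mathscr{F}^\infty$ satisfying $g(\theta)=a$ and $\|g\|_{\mathscr{F}^\infty}\le C\|f\|_{\mathscr{F}^\infty}$ for some constant $C=C(\theta,\lambda)$.

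My plan is to periodize $f$ via a weighted sum of its imaginary translates, using an entire scalar weight that takes the value $1$ at $\theta$ and vanishes at the other lattice points $\theta+in\lambda$, $n\ne 0$. A convenient choice is
\[
\psi(z):=\left(\frac{\sinh(\pi(z-\theta)/\lambda)}{\pi(z-\theta)/\lambda}\right)^{2},
\]
which satisfies $\psi(\theta)=1$ and $\psi(\theta+in\lambda)=0$ for every $n\ne 0$. Using the identity $|\sinh(u+iv)|^{2}=\sinh^{2}(u)+\sin^{2}(v)$, one checks that $|\psi(j+iy)|\le K_\lambda/(1+y^{2})$ on each vertical boundary of $\mathbb{S}$, providing the decay required for the periodization to converge. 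I then set
\[
g(z):=\sum_{n\in\mathbb{Z}}\psi(z+in\lambda)\,f(z+in\lambda).
\]
The periodicity $g(z+i\lambda)=g(z)$ is immediate from reindexing; the interpolating property of $\psi$ yields $g(\theta)=\psi(\theta)f(\theta)=a$; and the pointwise estimate $\|g(j+iy)\|_{A_j}\le\|f\|_{\mathscr{F}^\infty}\sum_{n}|\psi(j+i(y+n\lambda))|$, together with the $\lambda$-periodicity in $y$ and the uniform boundedness of the weight sum, gives the desired norm inequality with $C=\sup_{j,y}\sum_{n}|\psi(j+i(y+n\lambda))|$.

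The step I expect to require the most care is verifying $g\in\mathscr{F}^\infty$: continuity of $g$ on $\mathbb{S}$ into $A_{0}+A_{1}$, weak analyticity on $\mathbb{S}^{\circ}$, and continuity into $A_{j}$ of the restriction to each vertical boundary line. All three should follow from the uniform absolute convergence of the defining series on compact subsets of $\mathbb{S}$, guaranteed by the $1/n^{2}$-type decay of the weights, combined with the fact that each partial sum is a finite combination of translates of $\psi\cdot f$ and so already lies in $\mathscr{F}^\infty$. Once this regularity is in place the theorem follows by taking the infimum over representing $f\in\mathscr{F}(A_0,A_1)$.
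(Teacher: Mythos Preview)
Your argument is correct, and the overall strategy---periodize $f$ by a weighted sum $\sum_{n}\phi(z+in\lambda)f(z+in\lambda)$ with an entire weight $\phi$ satisfying $\phi(\theta)=1$, $\phi(\theta+in\lambda)=0$ for $n\ne0$, and suitable decay on $\partial\mathbb{S}$---is exactly the one the paper (following Cwikel) uses in Section~2.1. The difference is in the choice of weight. The paper takes the product $e^{\delta(z-\theta+ik\lambda)^{2}}w(z+ik\lambda)$ with $w$ as in Claim~\ref{clm:bounding_w-1}, so the Gaussian supplies the decay and $w$ supplies the interpolation zeros; you instead pack both features into the single function $\psi$, and in fact the identity $\sinh(\zeta+i\pi n)=(-1)^{n}\sinh\zeta$ gives $\psi(z+in\lambda)=\sinh^{2}(\pi(z-\theta)/\lambda)\big/(\pi(z-\theta)/\lambda+i\pi n)^{2}$, making the $1/n^{2}$ decay on compacta transparent. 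Your route is tidier for this theorem as stated, and the regularity checks you outline (uniform absolute convergence on compacta $\Rightarrow$ continuity, analyticity, and $A_{j}$-continuity on the boundary) go through. The paper's choice, however, has the advantage of a free parameter $\delta$: in Theorem~\ref{thm:Main Result} one takes $\delta=1/\lambda$ to force $C_{1,\lambda}(\delta)\to1$ as $\lambda\to\infty$, whereas your constant $\sup_{j,y}\sum_{n}|\psi(j+i(y+n\lambda))|$ does not tend to $1$ (the tail $\sum_{n\ne0}$ contributes a term bounded below by a positive constant even as $\lambda\to\infty$), so your weight would not directly yield the main result without an additional damping factor. One cosmetic remark: your bound $|\psi(j+iy)|\le K_{\lambda}/(1+y^{2})$ has $K$ depending on $\theta$ as well (through $\sinh^{2}(\pi(j-\theta)/\lambda)$), but this is harmless since the theorem allows $C=C(\theta,\lambda)$.
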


\begin{rem*}
Cwikel also showed in \cite{cwikel-complex} that no uniform upper
bound of $C$ (with respect to $\lambda$) may be found, since for
some Banach couples $\left(A_{0},A_{1}\right)$, some $\theta\in(0,1)$
and some $a\in\left[A_{0},A_{1}\right]_{\theta}$ we have 
\[
\mbox{limsup}_{\lambda\rightarrow0}\frac{\left\Vert a\right\Vert _{\left[\theta,\lambda\right]}}{\left\Vert a\right\Vert _{\left[\theta\right]}}=\infty\,.
\]
Despite this dependence on $\lambda$ it can be seen (cf. also Theorem
\ref{thm:Main Result} below) that for each fixed $\lambda$ the constant
$C$ can in fact be bounded independently of $\theta$.
\end{rem*}

\begin{rem*}
As mentioned in \cite{cwikel-complex}, the periodic complex interpolation
spaces can also be defined via consideration of analytic Banach space
valued functions defined on an annulus. Let us explain this in a little
more detail: For each $\lambda\in(0,\infty)$ we set $\mathbb{A}_{\lambda}=\left\{ z\in\mathbb{C}\,:\:1\leq\left|z\right|\leq e^{\frac{2\pi}{\lambda}}\right\} $.
We notice that for each $g:\mathbb{A}_{\lambda}\rightarrow A_{0}+A_{1}$
we can define $f_{g}:\mathbb{S}\rightarrow A_{0}+A_{1}$ by setting
$f_{g}(z)=g(e^{\frac{2\pi}{\lambda}z})$. This defines a bijection
map between the set of all $A_{0}+A_{1}$-valued functions on $\mathbb{A}_{\lambda}$
and the set of all $A_{0}+A_{1}$-valued functions on $\mathbb{S}$
which are $i\lambda$-periodic. This map enables us to formulate an
alternative equivalent definition of $\left[A_{0},A_{1}\right]_{\theta}^{\lambda}$
by using $A_{0}+A_{1}$-valued functions defined on $\mathbb{A}_{\lambda}$
in an obvious way.
\end{rem*}

We conclude this section by proving three claims which will be used
in the proof of our main result. 
\begin{claim}
\label{clm: blocking the sum-1}Let $\lambda$ be a fixed positive
number. Setting $C_{1,\lambda}(\alpha)=e^{\alpha}+2e^{\alpha-0.25\alpha\lambda^{2}}\cdot\frac{2-e^{-\alpha\lambda^{2}}}{1-e^{-\alpha\lambda^{2}}}$
for any $\alpha>0$, one has
\[
\mbox{sup}_{\left|y\right|\leq\frac{\lambda}{2}}\left\{ \sum_{k\in\mathbb{Z}}e^{\alpha\left(1-(y+k\lambda)^{2}\right)}\right\} \leq C_{1,\lambda}(\alpha)\,.
\]
\end{claim}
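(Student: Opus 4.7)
The plan is to split the sum $\sum_{k\in\mathbb{Z}} e^{\alpha(1-(y+k\lambda)^{2})}$ into three blocks---the term $k=0$, the two terms with $|k|=1$, and the tail $|k|\ge 2$---so that each block accounts for one of the three summands produced by the algebraic rewriting
\[
C_{1,\lambda}(\alpha)=e^{\alpha}+2e^{\alpha-\alpha\lambda^{2}/4}+\frac{2e^{\alpha-\alpha\lambda^{2}/4}}{1-e^{-\alpha\lambda^{2}}},
\]
which uses the identity $1+\frac{1}{1-e^{-\alpha\lambda^{2}}}=\frac{2-e^{-\alpha\lambda^{2}}}{1-e^{-\alpha\lambda^{2}}}$.

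For any $y$ with $|y|\le \lambda/2$, the $k=0$ contribution is bounded by $e^{\alpha}$ because $y^{2}\ge 0$. For $|k|=1$, the triangle inequality gives $|y+k\lambda|\ge \lambda-|y|\ge \lambda/2$, so $(y+k\lambda)^{2}\ge \lambda^{2}/4$ and each of the two terms is at most $e^{\alpha-\alpha\lambda^{2}/4}$, contributing $2e^{\alpha-\alpha\lambda^{2}/4}$ in total. These first two steps reproduce the first two summands above.

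For the tail $|k|\ge 2$, I would again use $|y+k\lambda|\ge (|k|-1/2)\lambda$, together with the elementary inequality
\[
\bigl(|k|-\tfrac{1}{2}\bigr)^{2}-\tfrac{1}{4}-(|k|-2)=(|k|-1)^{2}+1\ \ge\ 0,\qquad |k|\ge 2,
\]
to conclude $e^{\alpha(1-(y+k\lambda)^{2})}\le e^{\alpha-\alpha\lambda^{2}/4}\,e^{-\alpha(|k|-2)\lambda^{2}}$. Summing a geometric series over $|k|\ge 2$ then yields
\[
\sum_{|k|\ge 2} e^{\alpha(1-(y+k\lambda)^{2})}\ \le\ 2e^{\alpha-\alpha\lambda^{2}/4}\sum_{j=0}^{\infty}e^{-\alpha j\lambda^{2}}\ =\ \frac{2e^{\alpha-\alpha\lambda^{2}/4}}{1-e^{-\alpha\lambda^{2}}},
\]
which is exactly the third summand. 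Adding the three estimates gives a bound that is uniform in $|y|\le \lambda/2$, hence bounds the supremum by $C_{1,\lambda}(\alpha)$.

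The only step requiring a design choice is the linear lower bound on $(|k|-1/2)^{2}$: taking the shift $|k|-2$ (rather than, say, $|k|-1$) is what makes the tail geometric series start at $j=0$ and thus produce $\frac{1}{1-e^{-\alpha\lambda^{2}}}$; this, combined with the $|k|=1$ block, reassembles into the factor $\frac{2-e^{-\alpha\lambda^{2}}}{1-e^{-\alpha\lambda^{2}}}$ appearing in $C_{1,\lambda}(\alpha)$. Everything else is a routine estimation.
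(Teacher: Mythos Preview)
Your proof is correct and follows essentially the same approach as the paper: both split off $k=0$, use $|y+k\lambda|\ge(|k|-\tfrac12)\lambda$, isolate the $|k|=1$ terms, and bound the $|k|\ge2$ tail by a geometric series. The only difference is cosmetic: the paper linearizes via $(k-\tfrac12)^2\ge k-\tfrac12$ for $k\ge2$ and then crudely discards a factor $e^{-1.25\alpha\lambda^2}$ to reach the stated form, whereas your inequality $(|k|-\tfrac12)^2\ge\tfrac14+(|k|-2)$ lands on the target directly.
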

\begin{proof}
First we shall show that 
\begin{equation}
\sum_{k=1}^{\infty}e^{\alpha\left(1-(y+k\lambda)^{2}\right)}\leq e^{\alpha-0.25\alpha\lambda^{2}}\cdot\frac{2-e^{-\alpha\lambda^{2}}}{1-e^{-\alpha\lambda^{2}}}\,.\label{eq:chopin}
\end{equation}
We notice that for every $k\geq1$ and $\left|y\right|\leq\frac{\lambda}{2}$
we have $y+k\lambda\geq k\lambda-\frac{\lambda}{2}\geq\frac{\lambda}{2}>0$,
hence

\begin{eqnarray*}
\sum_{k=1}^{\infty}e^{\alpha\left(1-(y+k\lambda)^{2}\right)} & \leq & \sum_{k=1}^{\infty}e^{\alpha\left(1-(k\lambda-\frac{\lambda}{2})^{2}\right)}\\
 & = & \sum_{k=1}^{\infty}e^{\alpha\left(1-\lambda^{2}(k-\frac{1}{2})^{2}\right)}\\
 & = & e^{\alpha}\left(e^{-\alpha\left(\frac{\lambda}{2}\right)^{2}}+\sum_{k=2}^{\infty}e^{-\alpha\lambda^{2}(k-\frac{1}{2})^{2}}\right)\\
 & \leq & e^{\alpha}\left(e^{-\alpha\left(\frac{\lambda}{2}\right)^{2}}+\sum_{k=2}^{\infty}e^{-\alpha\lambda^{2}(k-\frac{1}{2})}\right)\\
 & = & e^{\alpha}\left(e^{-\alpha\left(\frac{\lambda}{2}\right)^{2}}+e^{0.5\alpha\lambda^{2}}\sum_{k=2}^{\infty}e^{-\alpha\lambda^{2}k}\right)\\
 & = & e^{\alpha}\left(e^{-\alpha\left(\frac{\lambda}{2}\right)^{2}}+e^{0.5\alpha\lambda^{2}}\cdot\frac{e^{-2\alpha\lambda^{2}}}{1-e^{-\alpha\lambda^{2}}}\right)\\
 & \leq & e^{\alpha}\left(e^{-0.25\alpha\lambda^{2}}+\frac{e^{-0.25\alpha\lambda^{2}}}{1-e^{-\alpha\lambda^{2}}}\right)\\
 & = & e^{\alpha-0.25\alpha\lambda^{2}}\cdot\frac{2-e^{-\alpha\lambda^{2}}}{1-e^{-\alpha\lambda^{2}}}\,.
\end{eqnarray*}
This establishes (\ref{eq:chopin}). Then, replacing $y$ by $-y$
and $k$ by $-k$ in (\ref{eq:chopin}), we can immediately deduce
that for every $\left|y\right|\leq\frac{\lambda}{2}$ we also have
\begin{equation}
\sum_{k=-\infty}^{-1}e^{\alpha\left(1-(y+k\lambda)^{2}\right)}\leq e^{\alpha-0.25\alpha\lambda^{2}}\cdot\frac{2-e^{-\alpha\lambda^{2}}}{1-e^{-\alpha\lambda^{2}}}\,.\label{eq: est. neg. sigma-1-1}
\end{equation}
Finally, when $k=0$ and $\left|y\right|\leq\frac{\lambda}{2}$ we
may write 
\begin{equation}
e^{\alpha\left(1-(y+k\lambda)^{2}\right)}=e^{\alpha\left(1-y^{2}\right)}\leq e^{\alpha}\,.\label{eq: est. k=00003D0-1-1}
\end{equation}
Combining (\ref{eq:chopin}), (\ref{eq: est. neg. sigma-1-1}), and
(\ref{eq: est. k=00003D0-1-1}) we obtain
\[
\begin{array}{rcl}
{\displaystyle \mbox{sup}_{\left|y\right|\leq\frac{\lambda}{2}}\left\{ \sum_{k\in\mathbb{Z}}e^{\alpha\left(1-(y+k\lambda)^{2}\right)}\right\} } & \le & {\displaystyle e^{\alpha}+2e^{\alpha-0.25\alpha\lambda^{2}}\cdot\frac{2-e^{-\alpha\lambda^{2}}}{1-e^{-\alpha\lambda^{2}}}}\\
 & = & C_{1,\lambda}(\alpha)
\end{array}
\]
completing the proof.\end{proof}
\begin{claim}
\label{Clm:Mozart}Let $\lambda$ be a fixed positive number. For
every $f\in\mathscr{F}\left(A_{0},A_{1}\right)$, every $\alpha>0$,
every $0<\theta<1$ and every analytic function $g:\mathbb{C}\rightarrow\mathbb{C}$,
if $\mbox{sup}_{z\in\mathbb{S}}\left\{ \left|g(z)\right|\right\} =M<\infty$
then the function $F_{\alpha}:\mathbb{S}\rightarrow A_{0}+A_{1}$
defined by
\[
F_{\alpha}(z)=\sum_{k\in\mathbb{Z}}e^{\alpha(z-\theta+ik\lambda)^{2}}g(z+ik\lambda)\cdot f(z+ik\lambda)
\]
is an element of $\mathscr{F}_{\lambda}\left(A_{0},A_{1}\right)$.
Moreover, we also have
\[
\left\Vert F_{\alpha}\right\Vert _{\mathscr{F}^{\infty}}\leq C_{1,\lambda}(\alpha)M\cdot\left\Vert f\right\Vert _{\mathscr{F}^{\infty}}\,.
\]
\end{claim}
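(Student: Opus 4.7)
The plan is to verify the four defining conditions of $\mathscr{F}^\infty$, the $i\lambda$-periodicity, and the norm estimate for $F_\alpha$, with the norm estimate essentially being a repackaging of Claim \ref{clm: blocking the sum-1}.

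First I would establish the norm bound termwise. For $z=j+iy$ with $j\in\{0,1\}$ one has
\[
\bigl|e^{\alpha(z-\theta+ik\lambda)^{2}}\bigr|=e^{\alpha\bigl[(j-\theta)^{2}-(y+k\lambda)^{2}\bigr]}\leq e^{\alpha\bigl[1-(y+k\lambda)^{2}\bigr]},
\]
since $0<\theta<1$ forces $(j-\theta)^{2}<1$. Combined with $|g(j+i(y+k\lambda))|\leq M$ (as $j+i(y+k\lambda)\in\mathbb{S}$) and $\|f(j+i(y+k\lambda))\|_{A_{j}}\leq\|f\|_{\mathscr{F}^{\infty}}$, this yields
\[
\|F_{\alpha}(j+iy)\|_{A_{j}}\leq M\|f\|_{\mathscr{F}^{\infty}}\sum_{k\in\mathbb{Z}}e^{\alpha[1-(y+k\lambda)^{2}]}.
\]
The right-hand sum is $\lambda$-periodic in $y$ (by reindexing $k$), so its supremum over $\mathbb{R}$ equals its supremum over $|y|\leq\lambda/2$, which by Claim \ref{clm: blocking the sum-1} is at most $C_{1,\lambda}(\alpha)$. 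Taking the supremum over $j$ and $y$ gives exactly the asserted estimate $\|F_{\alpha}\|_{\mathscr{F}^{\infty}}\leq C_{1,\lambda}(\alpha)M\|f\|_{\mathscr{F}^{\infty}}$, and simultaneously proves absolute convergence of the defining series in $A_{j}$ on each boundary line. Absolute convergence in $A_{0}+A_{1}$ on the whole strip follows via $\|\cdot\|_{A_{0}+A_{1}}\leq\|\cdot\|_{A_{j}}$ together with the three-lines principle applied to the Banach-valued analytic function $f$.

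Next, $i\lambda$-periodicity is immediate: substituting $z\mapsto z+i\lambda$ in each summand and then reindexing $k\mapsto k-1$ returns $F_{\alpha}(z)$. For continuity of $F_{\alpha}$ on $\mathbb{S}$ into $A_{0}+A_{1}$, continuity of each boundary restriction into $A_{j}$, and analyticity on $\mathbb{S}^{\circ}$ into $A_{0}+A_{1}$, I would invoke uniform convergence of the series on compacta. Each summand has the required regularity (it is the product of a scalar entire function, the scalar analytic $g$, and the Banach-valued analytic $f(\cdot+ik\lambda)$), and the Gaussian factor $e^{-\alpha(y+k\lambda)^{2}}$ together with the uniform bounds on $g$ and on $f$ furnishes a summable scalar majorant on any compact subset of $\mathbb{S}^{\circ}$ and on any bounded subset of a boundary line.

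The only genuine obstacle is to combine these uniform-convergence arguments for the three separate regularity statements (interior analyticity, boundary continuity into $A_{j}$, and global continuity into $A_{0}+A_{1}$), but in each case the same Gaussian decay in $k$ dominates everything else, so the verification reduces to a standard application of the Weierstrass $M$-test for Banach-valued analytic series.
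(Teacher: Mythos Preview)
Your proposal is correct and follows essentially the same argument as the paper: termwise estimation on the boundary lines via the Gaussian factor and Claim~\ref{clm: blocking the sum-1}, together with uniform convergence on compacta (the Weierstrass $M$-test) to obtain well-definedness, continuity, and analyticity. The only cosmetic difference is that the paper reduces the boundary supremum to $|y|\le\lambda/2$ by invoking the $i\lambda$-periodicity of $F_\alpha$ itself, whereas you invoke the $\lambda$-periodicity in $y$ of the scalar majorant $\sum_k e^{\alpha[1-(y+k\lambda)^2]}$; these are equivalent.
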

\begin{proof}
It is clear that the scalar valued series $\sum_{k\in\mathbb{Z}}e^{\alpha(z-\theta+ik\lambda)^{2}}g(z+ik\lambda)$
converges absolutely and uniformly in every compact subset of $\mathbb{S}$.
This fact implies, first of all, that the $A_{0}+A_{1}$-valued function
$F_{\alpha}$ is well defined, continuous, and clearly $i\lambda$-periodic
and therefore bounded. The same fact also implies that the restriction
of $F_{\alpha}$ to $\mathbb{S}^{\circ}$ is analytic. Defining $\Lambda_{j}=\left\{ j+iy\,:\, y\in\mathbb{R}\right\} $
for $j=0,1$, the fact that the functions $F_{\alpha}\mid_{\Lambda_{j}}:\Lambda_{j}\rightarrow A_{j}$
(for $j=0,1$) are continuous (and bounded due to periodicity) follows
readily from the same argument.

Thus we have established that $F_{\alpha}\in\mathscr{F}_{\lambda}\left(A_{0},A_{1}\right)$.
Furthermore, bearing in mind that $F_{\alpha}$ is $i\lambda$-periodic,
we see that
\begin{equation}
\left\Vert F_{\alpha}\right\Vert _{\mathscr{F}^{\infty}}=\mbox{sup}_{j=0,1\,,\,\left|y\right|\leq\frac{\lambda}{2}}\left\{ \left\Vert F_{\alpha}(j+iy)\right\Vert _{A_{j}}\right\} \,.\label{eq:F_alpha_norm-1}
\end{equation}
Now, for each fixed $j\in\left\{ 0,1\right\} $ and $\left|y\right|\leq\frac{\lambda}{2}$
we have
\begin{equation}
\begin{array}{cl}
 & {\displaystyle \left\Vert F_{\alpha}(j+iy)\right\Vert _{A_{j}}}\\
= & {\displaystyle \mbox{lim}_{n\rightarrow\infty}\left\Vert \sum_{k=-n}^{n}e^{\alpha(j+i(y+k\lambda)-\theta)^{2}}g(j+i(y+k\lambda))\cdot f(j+i(y+k\lambda))\right\Vert _{A_{j}}}\\
\le & {\displaystyle \mbox{lim}_{n\rightarrow\infty}\sum_{k=-n}^{n}\left|e^{\alpha(j+i(y+k\lambda)-\theta)^{2}}g(j+i(y+k\lambda))\right|\left\Vert f(j+i(y+k\lambda))\right\Vert _{A_{j}}}\\
\le & {\displaystyle \mbox{lim}_{n\rightarrow\infty}\sum_{k=-n}^{n}e^{\alpha\left((j-\theta)^{2}-(y+k\lambda)^{2}\right)}M\left\Vert f\right\Vert _{\mathscr{F}^{\infty}}}\\
\le & {\displaystyle {\displaystyle \mbox{lim}_{n\rightarrow\infty}\sum_{k=-n}^{n}e^{\alpha\left(1-(y+k\lambda)^{2}\right)}M\left\Vert f\right\Vert _{\mathscr{F}^{\infty}}\,.}}
\end{array}\label{eq:NEW FalphaOnTheBoundary}
\end{equation}
Combining (\ref{eq:F_alpha_norm-1}), (\ref{eq:NEW FalphaOnTheBoundary})
and Claim \ref{clm: blocking the sum-1} we conclude that $\left\Vert F_{\alpha}\right\Vert _{\mathscr{F}^{\infty}}\leq C_{1,\lambda}(\alpha)M\cdot\left\Vert f\right\Vert _{\mathscr{F}^{\infty}}$,
as required.
\end{proof}

\begin{claim}
\label{clm:bounding_w-1}For a given $\lambda>0$ and $0<\theta<1$,
let us define $w:\mathbb{C}\to\mathbb{C}$ by $w(\theta)=1$ and $w(z)=\frac{\lambda}{2\pi}\cdot\frac{e^{\frac{2\pi}{\lambda}(z-\theta)-1}}{z-\theta}$
for all $z\ne\theta$. We also define $m(\lambda)=\frac{\lambda}{2\pi}\left(1+e^{\frac{4\pi}{\lambda}}\right)$.
We then have
\begin{equation}
\mbox{sup}_{z\in\mathbb{S}}\left\{ \left|w(z)\right|\right\} \leq m(\lambda)\,.\label{eq:upper_bd_for_w-1}
\end{equation}
\end{claim}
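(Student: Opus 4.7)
My plan is to first observe that the apparent singularity of $w$ at $z=\theta$ is removable, so $w$ admits an integral representation that sidesteps that point entirely; the desired supremum bound will then follow from the triangle inequality applied under the integral sign, plus a short numerical check.

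Writing $a=\frac{2\pi}{\lambda}$, elementary calculus gives, for every $z\in\mathbb{C}$,
\[
w(z)=\frac{e^{a(z-\theta)}-1}{a(z-\theta)}=\int_{0}^{1}e^{at(z-\theta)}\,dt,
\]
where both sides are understood to equal $1$ when $z=\theta$. This identity simultaneously justifies the prescription $w(\theta)=1$ and exhibits $w$ as an entire function, so in particular it is continuous on $\mathbb{S}$ and the supremum in question is automatically well defined.

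I would then estimate, for every $z\in\mathbb{S}$,
\[
|w(z)|\le\int_{0}^{1}\bigl|e^{at(z-\theta)}\bigr|\,dt=\int_{0}^{1}e^{at\,\mbox{Re}(z-\theta)}\,dt\le e^{\frac{2\pi}{\lambda}},
\]
using that $at\,\mbox{Re}(z-\theta)\le at(1-\theta)\le a=\frac{2\pi}{\lambda}$ for all $t\in[0,1]$ and all $z\in\mathbb{S}$. It only remains to verify the numerical inequality $e^{\frac{2\pi}{\lambda}}\le m(\lambda)$: setting $u=e^{\frac{2\pi}{\lambda}}\ge 1$, this reduces to $u\ln u\le 1+u^{2}$, which is immediate from $\ln u\le u-1$ for $u\ge 1$. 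The only delicate point anywhere in the argument is the behaviour near $z=\theta$, where the naive triangle-inequality bound applied to the original quotient formula would diverge; the integral representation handles this automatically, so no case analysis or separate treatment of small $|z-\theta|$ is needed.
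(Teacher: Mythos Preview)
Your argument is correct and takes a genuinely different route from the paper. The paper observes that $w$ is entire and bounded on the wider strip $\mathbb{S}_2=\{-1\le\operatorname{Re}z\le 2\}$, applies the Phragm\'en--Lindel\"of principle there, and then estimates $|w(j+iy)|$ on the two boundary lines $j=-1,2$ via the triangle inequality on the quotient formula; the point of passing to $\mathbb{S}_2$ is precisely to ensure $|j-\theta|\ge 1$, so that the denominator $z-\theta$ causes no trouble. Your integral representation $w(z)=\int_0^1 e^{at(z-\theta)}\,dt$ bypasses both the maximum principle and the denominator issue in one stroke, and is more elementary. It also gives the sharper bound $\sup_{\mathbb{S}}|w|\le e^{2\pi/\lambda}$, which tends to $1$ as $\lambda\to\infty$, whereas the paper's $m(\lambda)$ tends to $\infty$; in the paper's later application this makes no real difference because $m(\lambda)$ is multiplied by an exponentially small factor, but your estimate is cleaner. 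The final numerical verification $u\ln u\le 1+u^2$ is fine (note $u=e^{2\pi/\lambda}>1$ strictly for finite $\lambda$, so $\ln u>0$ and the division is legitimate).
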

\begin{proof}
It is easy to see that $w(z)$ is an analytic function in the entire
complex plane, bounded on $\mathbb{S}_{2}:=\left\{ z\in\mathbb{C}\,:\:-1\leq\mbox{Re}z\leq2\right\} $.
Applying the Phragm\'en-Lindel\"of maximum principle we deduce that
\begin{eqnarray*}
\mbox{sup}_{z\in\mathbb{S}}\left\{ \left|w(z)\right|\right\}  & \leq & \mbox{sup}_{z\in\mathbb{S}_{2}}\left\{ \left|w(z)\right|\right\} \\
 & = & \mbox{sup}_{z\in\partial\mathbb{S}_{2}}\left\{ \left|w(z)\right|\right\} \\
 & = & \mbox{sup}_{j=-1,2\,,\, y\in\mathbb{R}}\left\{ \left|w(j+iy)\right|\right\} \,.
\end{eqnarray*}
According to the definition of $w(z)$, for $j=-1,2$ and for all
$y\in\mathbb{R}$, one has
\[
\begin{array}{rcl}
{\displaystyle \left|w(j+iy)\right|} & = & {\displaystyle {\displaystyle \left|\frac{\lambda}{2\pi}\cdot\frac{\left(e^{\frac{2\pi}{\lambda}(j-\theta+iy)}-1\right)}{j-\theta+iy}\right|}}\\
 & \leq & {\displaystyle \frac{\lambda}{2\pi\left|j-\theta\right|}\left(\left|e^{\frac{2\pi}{\lambda}(j-\theta+iy)}\right|+1\right)}\\
 & = & {\displaystyle \frac{\lambda}{2\pi\left|j-\theta\right|}\left(1+e^{\frac{2\pi}{\lambda}(j-\theta)}\right)}\\
 & \leq & {\displaystyle {\displaystyle \frac{\lambda}{2\pi\left|j-\theta\right|}\left(1+e^{\frac{4\pi}{\lambda}}\right)\,.}}
\end{array}
\]
Since $j=-1,2$ and $0<\theta<1$ we necessarily have $\frac{1}{\left|j-\theta\right|}\leq1$,
and from this (\ref{eq:upper_bd_for_w-1}) immediately follows.
\end{proof}

\section{The Main Result}

In this section we state and prove our main result which complements
the results obtained in \cite{cwikel-complex}.
\begin{thm}
\label{thm:Main Result}There is a function $C=C(\lambda)$ such that
$\mbox{lim}_{\lambda\rightarrow\infty}C(\lambda)=1$ and, for all
Banach couples $(A_{0},A_{1})$, all $0<\theta<1$, all $a\in\left[A_{0},A_{1}\right]_{\theta}$,
and all $\lambda>0$, one has
\begin{equation}
\left\Vert a\right\Vert _{[\theta]}\le\left\Vert a\right\Vert _{[\theta,\lambda]}\le C(\lambda)\left\Vert a\right\Vert _{[\theta]}\,.\label{eq:main_inequality}
\end{equation}

\end{thm}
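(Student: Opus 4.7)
The left inequality in (\ref{eq:main_inequality}) needs no work: any function in $\mathscr{F}_{\lambda}$ belongs to $\mathscr{F}^{\infty}$, and by the remark following (\ref{eq:DefNorm}) the norm $\|a\|_{[\theta]}$ may be computed as an infimum over $\mathscr{F}^{\infty}$, so $\|a\|_{[\theta]}\le\|a\|_{[\theta,\lambda]}$. All the content lies in the opposite direction, and my plan is to apply Claim \ref{Clm:Mozart} with $g$ chosen to be the function $w$ from Claim \ref{clm:bounding_w-1}. Given $\varepsilon>0$, pick $f\in\mathscr{F}(A_{0},A_{1})$ with $f(\theta)=a$ and $\|f\|_{\mathscr{F}^{\infty}}\le\|a\|_{[\theta]}+\varepsilon$, let $\alpha=\alpha(\lambda)>0$ be a parameter to be chosen later, and set
\[
F(z)=\sum_{k\in\mathbb{Z}}e^{\alpha(z-\theta+ik\lambda)^{2}}w(z+ik\lambda)f(z+ik\lambda).
\]
Claim \ref{Clm:Mozart} guarantees $F\in\mathscr{F}_{\lambda}(A_0,A_1)$, and the crucial features of $w$ are that $w(\theta)=1$ and $w(\theta+ik\lambda)=0$ for every nonzero integer $k$ (the latter because $e^{(2\pi/\lambda)\cdot ik\lambda}-1=0$), so at $z=\theta$ only the $k=0$ term survives and $F(\theta)=f(\theta)=a$. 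Thus $F$ is a legitimate competitor for $\|a\|_{[\theta,\lambda]}$, and combining with the norm bound from Claim \ref{Clm:Mozart} and letting $\varepsilon\to 0$ yields
\[
\|a\|_{[\theta,\lambda]}\le C_{1,\lambda}(\alpha(\lambda))\cdot M(\lambda)\cdot\|a\|_{[\theta]},\qquad M(\lambda):=\sup_{z\in\mathbb{S}}|w(z)|,
\]
so the theorem reduces to choosing $\alpha(\lambda)$ making the prefactor tend to $1$.

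For the first factor, the formula $C_{1,\lambda}(\alpha)=e^{\alpha}+2e^{\alpha-0.25\alpha\lambda^{2}}(2-e^{-\alpha\lambda^{2}})/(1-e^{-\alpha\lambda^{2}})$ points to a recipe forcing $\alpha\to 0$ together with $\alpha\lambda^{2}\to\infty$: the concrete choice $\alpha(\lambda)=\lambda^{-1}$ makes $e^{\alpha}\to 1$, $e^{-\alpha\lambda^{2}}=e^{-\lambda}\to 0$, and $e^{\alpha-0.25\alpha\lambda^{2}}=e^{\lambda^{-1}-\lambda/4}\to 0$, so $C_{1,\lambda}(\alpha(\lambda))\to 1$. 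The main obstacle, and the reason the argument is not purely mechanical, is the second factor: the bound $m(\lambda)$ furnished by Claim \ref{clm:bounding_w-1} grows linearly in $\lambda$ and is therefore worthless here. I would sharpen it by exploiting the integral representation
\[
w(z)=\int_{0}^{1}e^{(2\pi/\lambda)t(z-\theta)}\,dt,
\]
valid everywhere because $(e^{u}-1)/u=\int_{0}^{1}e^{tu}\,dt$. Passing absolute values inside and using $|\mathrm{Re}(z-\theta)|\le 1$ on $\mathbb{S}$ gives $|w(z)|\le\int_{0}^{1}e^{(2\pi/\lambda)t\,\mathrm{Re}(z-\theta)}\,dt\le e^{2\pi/\lambda}$, uniformly in $\theta\in(0,1)$, so $M(\lambda)\le e^{2\pi/\lambda}\to 1$. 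Setting $C(\lambda):=C_{1,\lambda}(\lambda^{-1})\cdot e^{2\pi/\lambda}$ then produces the required constant tending to $1$, and finishes the proof.
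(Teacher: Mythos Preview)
Your argument is correct and in fact cleaner than the paper's. Both proofs start from the same ``rough estimate'': periodize $f$ against the weight $w$ via Claim~\ref{Clm:Mozart} to get $\|a\|_{[\theta,\lambda]}\le C_{1,\lambda}(\alpha)\cdot\bigl(\sup_{\mathbb{S}}|w|\bigr)\cdot\|a\|_{[\theta]}$. The paper, however, only invokes the bound $\sup_{\mathbb{S}}|w|\le m(\lambda)=\tfrac{\lambda}{2\pi}(1+e^{4\pi/\lambda})$ supplied by Claim~\ref{clm:bounding_w-1}, which grows linearly in $\lambda$ and so cannot by itself yield a constant tending to $1$. To repair this, the paper adds a second ``fine-tuning'' layer: it periodizes once more with $g\equiv 1$ to produce an element $\tilde{a}$ that is close to $a$ in the $[\theta]$-norm, applies the rough estimate only to the small difference $a-\tilde{a}$, and then assembles the pieces with the triangle inequality. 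Your observation that the integral representation $w(z)=\int_{0}^{1}e^{(2\pi/\lambda)t(z-\theta)}\,dt$ gives the much sharper bound $\sup_{\mathbb{S}}|w|\le e^{2\pi/\lambda}\to 1$ makes that entire second stage unnecessary: the rough estimate is already strong enough, and the resulting constant $C(\lambda)=C_{1,\lambda}(\lambda^{-1})\,e^{2\pi/\lambda}$ is tighter for large $\lambda$ than the paper's $\bigl(m(\lambda)\tfrac{2e^{-\lambda}}{1-e^{-\lambda}}+1\bigr)C_{1,\lambda}(\lambda^{-1})$.
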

As we have mentioned, the equality $\left[A_{0},A_{1}\right]_{\theta}=\left[A_{0},A_{1}\right]_{\theta}^{\lambda}$
was already proven in Theorem \ref{thm:Cwikel}, and the inequality
$\left\Vert a\right\Vert _{[\theta]}\le\left\Vert a\right\Vert _{[\theta,\lambda]}$
obviously holds for all $a\in\left[A_{0},A_{1}\right]_{\theta}=\left[A_{0},A_{1}\right]_{\theta}^{\lambda}$,
so in order to prove Theorem \ref{thm:Main Result} we need only to
prove the right hand side of (\ref{eq:main_inequality}). The proof
extends over the next three subsections:

\subsection{A rough estimate}

Suppose $x\in\left[A_{0},A_{1}\right]_{\theta}$, and $f\in\mathscr{F}(A_{0},A_{1})$
is such that $x=f(\theta)$. Let us fix $\lambda>0$ and, following
Cwikel in \cite{cwikel-complex}, define $G_{\delta}(z)=\sum_{k\in\mathbb{Z}}e^{\delta(z-\theta+ik\lambda)^{2}}w(z+ik\lambda)\cdot f(z+ik\lambda)$
where we leave the value of $\delta>0$ undetermined for now (the
function $w(z)$ was defined in Claim \ref{clm:bounding_w-1}). Applying
Claim \ref{clm:bounding_w-1} and then Claim \ref{Clm:Mozart}, we
see that $G_{\delta}\in\mathscr{F}_{\lambda}\left(A_{0},A_{1}\right)$
for every $\delta>0$. Moreover, since $w(\theta)=1$ and $w(\theta+ik\lambda)=0$
for all $0\neq k\in\mathbb{Z}$, we have $G_{\delta}(\theta)=f(\theta)=x$.
Again, applying Claims \ref{clm:bounding_w-1} and \ref{Clm:Mozart},
we gather that 
\begin{eqnarray*}
\left\Vert x\right\Vert _{[\theta,\lambda]} & \leq & \left\Vert G_{\delta}\right\Vert _{\mathscr{F}^{\infty}}\\
 & \leq & m(\lambda)C_{1,\lambda}(\delta)\left\Vert f\right\Vert _{\mathscr{F}^{\infty}}\,.
\end{eqnarray*}
Finally, taking the infimum over all $f\in\mathscr{F}\left(A_{0},A_{1}\right)$
such that $x=f(\theta)$ will yield

\begin{eqnarray}
\left\Vert x\right\Vert _{[\theta,\lambda]} & \leq & m(\lambda)C_{1,\lambda}(\delta)\left\Vert x\right\Vert _{[\theta]}\label{eq:crude estimation of equi. constant}
\end{eqnarray}
for all $x\in\left[A_{0},A_{1}\right]_{\theta}$.

\subsection{Some fine tuning}

Given an $a\in\left[A_{0},A_{1}\right]_{\theta}$, for each $f\in\mathscr{F}\left(A_{0},A_{1}\right)$
such that $a=f(\theta)$ we choose a certain $\rho>0$ to be determined
later and define $\tilde{f}_{\rho}(z)=\sum_{k\in\mathbb{Z}}e^{\rho(z-\theta+ik\lambda)^{2}}f(z+ik\lambda)$.
According to Claim \ref{Clm:Mozart} (when we set $g(z)\equiv1$)
we have $\tilde{f}_{\rho}\in\mathscr{F}_{\lambda}\left(A_{0},A_{1}\right)$.
Defining $\tilde{a}=\tilde{f}_{\rho}(\theta)\in\left[A_{0},A_{1}\right]_{\theta}^{\lambda}$,
we have, according to the same claim, that
\begin{equation}
\begin{array}{rcl}
{\displaystyle \left\Vert \tilde{a}\right\Vert _{[\theta,\lambda]}} & \leq & {\displaystyle \left\Vert \tilde{f}_{\rho}\right\Vert _{\mathscr{F}^{\infty}}}\\
 & \leq & {\displaystyle C_{1,\lambda}(\rho)\left\Vert f\right\Vert _{\mathscr{F}^{\infty}}\,.}
\end{array}\label{eq: 1st est. of a tilde}
\end{equation}
Defining 
\[
P_{n}=\left\Vert \tilde{a}-{\displaystyle \sum_{k=-n}^{n}}e^{-\rho(k\lambda)^{2}}f(\theta+ik\lambda)\right\Vert _{[\theta]}
\]
and
\[
Q_{n}=\left\Vert {\displaystyle \sum_{k=-n}^{n}}e^{-\rho(k\lambda)^{2}}f(\theta+ik\lambda)-a\right\Vert _{[\theta]}
\]
we obviously may write
\begin{equation}
\left\Vert \tilde{a}-a\right\Vert _{[\theta]}\le P_{n}+Q_{n}\,.\label{eq:a tilde a Qn Pn}
\end{equation}
Estimating the expression $Q_{n}$ we get

\begin{eqnarray}
Q_{n} & = & {\displaystyle \left\Vert \sum_{k=-n}^{n}e^{-\rho(k\lambda)^{2}}f(\theta+ik\lambda)-a\right\Vert _{[\theta]}}\nonumber \\
 & = & \left\Vert \sum_{k=-n}^{n}e^{-\rho(k\lambda)^{2}}f(\theta+ik\lambda)-f(\theta)\right\Vert _{[\theta]}\nonumber \\
 & \le & \sum_{k=-n}^{-1}\left\Vert e^{-\rho(k\lambda)^{2}}f(\theta+ik\lambda)\right\Vert _{[\theta]}+\sum_{k=1}^{n}\left\Vert e^{-\rho(k\lambda)^{2}}f(\theta+ik\lambda)\right\Vert _{[\theta]}\nonumber \\
 & \le & \sum_{k=-n}^{-1}e^{-\rho(k\lambda)^{2}}\left\Vert f\right\Vert _{\mathscr{F}^{\infty}}+\sum_{k=1}^{n}e^{-\rho(k\lambda)^{2}}\left\Vert f\right\Vert _{\mathscr{F}^{\infty}}\label{eq:NEW-Qn}\\
 & = & {\displaystyle 2\sum_{k=1}^{n}e^{-\rho(k\lambda)^{2}}\left\Vert f\right\Vert _{\mathscr{F}^{\infty}}}\nonumber \\
 & \le & 2\sum_{k=1}^{\infty}e^{-\rho k\lambda^{2}}\left\Vert f\right\Vert _{\mathscr{F}^{\infty}}\nonumber \\
 & = & {\displaystyle 2e^{-\rho\lambda^{2}}\cdot\frac{1}{1-e^{-\rho\lambda^{2}}}\left\Vert f\right\Vert _{\mathscr{F}^{\infty}}}\,.\nonumber 
\end{eqnarray}

We will now show that 
\begin{equation}
\mbox{lim}_{n\rightarrow\infty}P_{n}=0\,.\label{eq:Pn}
\end{equation}
First we note that, by definition 
\begin{equation}
\mbox{lim}_{n\rightarrow\infty}\left\Vert \tilde{a}-\sum_{k=-n}^{n}e^{-\rho(k\lambda)^{2}}f(\theta+ik\lambda)\right\Vert _{A_{0}+A_{1}}=0\,.\label{eq: the eq. we know-1}
\end{equation}
Furthermore, we observe that $\left(\sum_{k=-n}^{n}e^{-\rho(k\lambda)^{2}}f(\theta+ik\lambda)\right)_{n\in\mathbb{N}}$
is a Cauchy sequence in $\left[A_{0},A_{1}\right]_{\theta}.$ Indeed,
if $m>n$ then 
\begin{eqnarray*}
{\color{red}} & {\color{red}} & \left\Vert \sum_{k=-m}^{m}e^{-\rho(k\lambda)^{2}}f(\theta+ik\lambda)-\sum_{k=-n}^{n}e^{-\rho(k\lambda)^{2}}f(\theta+ik\lambda)\right\Vert _{[\theta]}\\
{\color{red}} & \le & \sum_{k=n+1}^{m}\left\Vert e^{-\rho(k\lambda)^{2}}f(\theta+ik\lambda)\right\Vert _{[\theta]}+\sum_{k=-m}^{-n-1}\left\Vert e^{-\rho(k\lambda)^{2}}f(\theta+ik\lambda)\right\Vert _{[\theta]}\\
{\color{red}} & \leq & 2\sum_{k=n+1}^{\infty}e^{-\rho\lambda^{2}k^{2}}\left\Vert f\right\Vert _{\mathscr{F}^{\infty}}
\end{eqnarray*}
and since clearly $\mbox{lim}_{n\rightarrow\infty}2\sum_{k=n+1}^{\infty}e^{-\rho\lambda^{2}k^{2}}\left\Vert f\right\Vert _{\mathscr{F}^{\infty}}=0$,
the sequence $\left(\sum_{k=-n}^{n}e^{-\rho(k\lambda)^{2}}f(\theta+ik\lambda)\right)_{n\in\mathbb{N}}$
is Cauchy. This means that there is an element $\bar{a}\in\left[A_{0},A_{1}\right]_{\theta}$
such that 
\begin{equation}
\mbox{lim}_{n\rightarrow\infty}\left\Vert \bar{a}-\sum_{k=-n}^{n}e^{-\rho(k\lambda)^{2}}f(\theta+ik\lambda)\right\Vert _{[\theta]}=0\,.\label{eq:almost there}
\end{equation}
Since $\left\Vert x\right\Vert _{A_{0}+A_{1}}\leq\left\Vert x\right\Vert _{[\theta]}$
for every $x\in\left[A_{0},A_{1}\right]_{\theta}$ (cf., e.g., \cite{Calderon}
paragraph 23 p.~129 where there is implicitly a routine application
of the Phragm\'en-Lindel\"of maximum principle for vector valued
analytic functions), it is also true that
\[
\mbox{lim}_{n\rightarrow\infty}\left\Vert \bar{a}-\sum_{k=-n}^{n}e^{-\rho(k\lambda)^{2}}f(\theta+ik\lambda)\right\Vert _{A_{0}+A_{1}}=0\,.
\]
Combining this last equality with (\ref{eq: the eq. we know-1}) we
immediately get that $\bar{a}=\tilde{a}$. Using (\ref{eq:almost there}),
we immediately obtain (\ref{eq:Pn}).

Combining (\ref{eq:NEW-Qn}), (\ref{eq:Pn}) and (\ref{eq:a tilde a Qn Pn})
we get
\begin{equation}
\left\Vert \tilde{a}-a\right\Vert _{[\theta]}\le\frac{2e^{-\rho\lambda^{2}}}{1-e^{-\rho\lambda^{2}}}\left\Vert f\right\Vert _{\mathscr{F}^{\infty}}\,.\label{eq:differ a and a tilde}
\end{equation}

\subsection{Conclusion}

Combining (\ref{eq:crude estimation of equi. constant}), (\ref{eq: 1st est. of a tilde}),
and (\ref{eq:differ a and a tilde}) we see that for each $a\in\left[A_{0},A_{1}\right]_{\theta}^{\lambda}$
and each $f\in\mathscr{F}\left(A_{0},A_{1}\right)$ such that $a=f(\theta)$
we may write
\begin{eqnarray*}
\left\Vert a\right\Vert _{\left[\theta,\lambda\right]} & \leq & \left\Vert a-\tilde{a}\right\Vert _{\left[\theta,\lambda\right]}+\left\Vert \tilde{a}\right\Vert _{\left[\theta,\lambda\right]}\\
 & \leq & m(\lambda)C_{1,\lambda}(\delta)\left\Vert a-\tilde{a}\right\Vert _{\left[\theta\right]}+\left\Vert \tilde{a}\right\Vert _{\left[\theta,\lambda\right]}\\
 & \leq & m(\lambda)C_{1,\lambda}(\delta)\cdot\frac{2e^{-\rho\lambda^{2}}}{1-e^{-\rho\lambda^{2}}}\left\Vert f\right\Vert _{\mathscr{F}^{\infty}}+C_{1,\lambda}(\rho)\left\Vert f\right\Vert _{\mathscr{F}^{\infty}}\,.
\end{eqnarray*}
Taking the infimum over all $f\in\mathscr{F}\left(A_{0},A_{1}\right)$
for which $a=f(\theta)$ we conclude that
\[
\left\Vert a\right\Vert _{\left[\theta,\lambda\right]}\leq\left(m(\lambda)\cdot\frac{2e^{-\rho\lambda^{2}}}{1-e^{-\rho\lambda^{2}}}C_{1,\lambda}(\delta)+C_{1,\lambda}(\rho)\right)\left\Vert a\right\Vert _{\left[\theta\right]}\,.
\]
If we now choose $\rho=\delta=\frac{1}{\lambda}$ we see that 
\[
\left\Vert a\right\Vert _{\left[\theta,\lambda\right]}\leq\left(m(\lambda)\cdot\frac{2e^{-\lambda}}{1-e^{-\lambda}}+1\right)C_{1,\lambda}\left(\frac{1}{\lambda}\right)\left\Vert a\right\Vert _{\left[\theta\right]}
\]
for all $a\in\left[A_{0},A_{1}\right]_{\theta}$. Since
\[
m(\lambda)\cdot\frac{2e^{-\lambda}}{1-e^{-\lambda}}=\frac{\lambda}{2\pi}\left(1+e^{\frac{4\pi}{\lambda}}\right)\frac{2e^{-\lambda}}{1-e^{-\lambda}}
\]
and
\[
C_{1,\lambda}\left(\frac{1}{\lambda}\right)=e^{\frac{1}{\lambda}}+2e^{\frac{1}{\lambda}-0.25\lambda}\cdot\frac{2-e^{-\lambda}}{1-e^{-\lambda}}
\]
it is easy to see that
\[
\mbox{lim}_{\lambda\rightarrow\infty}m(\lambda)\cdot\frac{2e^{-\lambda}}{1-e^{-\lambda}}=0
\]
and
\[
\mbox{lim}_{\lambda\rightarrow\infty}C_{1,\lambda}\left(\frac{1}{\lambda}\right)=1
\]
hence
\[
\mbox{lim}_{\lambda\rightarrow\infty}\left(m(\lambda)\cdot\frac{2e^{-\lambda}}{1-e^{-\lambda}}+1\right)C_{1,\lambda}\left(\frac{1}{\lambda}\right)=1\,.
\]
Obviously, this completes the proof of Theorem \ref{thm:Main Result}.
\begin{rem*}
We remark that some of our estimates in this paper are quite crude.
We therefore do not claim that the expression 
\[
\left(m(\lambda)\cdot\frac{2e^{-\lambda}}{1-e^{-\lambda}}+1\right)C_{1,\lambda}\left(\frac{1}{\lambda}\right)
\]
which we obtained in the previous proof provides a tight upper bound
of the asymptotic behavior of the best constant $C=C(\lambda)$ which
may be written in (\ref{eq:main_inequality}) as $\lambda\rightarrow\infty$.
\end{rem*}
\textit{Acknowledgements: }We thank Michael Cwikel for helpful discussions
and for his support in the preparation of this paper.


\begin{thebibliography}{References}
\bibitem{BSr-1}C.~Bennett and R.~Sharpley, \textit{Interpolation
of operators}, Academic Press, Inc., New York, 1988.

\bibitem{Bergh_Lofstrom}J.~Bergh and J.~Löfström, \textit{Interpolation
spaces, An Introduction,} Grundlehren der mathematische Wissenschaften
223, Springer, Berlin-Heidelberg-New York, 1976.

\bibitem{bk} Yu.~A.~Brudnyi and N.~Ya.~Krugljak, \textit{Interpolation
functors and interpolation spaces, Volume 1,} North-Holland, Amsterdam
1991.

\bibitem{Calderon}A.~P.~Calder\'on, Intermediate spaces and interpolation,
the complex method, \textit{Studia Math.}~\textbf{24} (1964), 113--190.

\bibitem{cwikel-complex}M.\ Cwikel, Complex interpolation, a discrete
definition and reiteration, \textit{Indiana Univ.\ Math.\ J.} \textbf{27}
(1978), 1005--1009.

\bibitem{kps} S.~G.~Krein, Ju.~I.~Petunin and E.~M.~Semenov,
\textit{Interpolation of linear operators.} Translations of Mathematical
Monographs, Vol.~54, American Mathematical Society, Providence R.I.,
1982.

\bibitem{peetre}J.\ Peetre, Sur l'utilization des suites inconditionellement
sommables dans la théorie des espaces d'interpolation.\ \textit{Rend.\ Sem.\ Mat.\ Univ.\ Padova}
\textbf{46} (1971), 173--190.

\bibitem{T} H.~Triebel, \textit{Interpolation Theory, Function Spaces,
Differential Operators.} North-Holland, Amsterdam 1978.\end{thebibliography}
\end{document}